\newtheorem{proposition}{Proposition}
\newtheorem{corollary}{Corollary}
\newtheorem{remark}{Remark}
\newtheorem{definition}{Definition}
\theoremstyle{remark}
\newtheorem{example}{\textbf{Example}}
\title{An isomorphism between the convolution product and the componentwise sum connected to the D'Arcais numbers and the Ramanujan tau function}
\author{Stefano Barbero, Umberto Cerruti, Nadir Murru\\ \\
Department of Mathematics G. Peano, University of Turin,\\
Via Carlo Alberto 10, 10123, Torino, ITALY\\ \\
stefano.barbero@unito.it, umberto.cerruti@unito.it, nadir.murru@unito.it}
\date{}
\begin{document}
\maketitle

\begin{abstract}
Given a commutative ring $R$ with identity, let $H_R$ be the set of sequences of elements in  $R$. We investigate a novel isomorphism between $(H_R, +)$ and $(\tilde H_R,*)$, where $+$ is the componentwise sum, $*$ is the convolution product (or Cauchy product) and $\tilde H_R$ the set of sequences starting with $1_R$. We also define a recursive transform over $H_R$ that, together to the isomorphism, allows to highlight new relations among some well studied integer sequences. Moreover, these connections allow to introduce a family of polynomials connected to the D'Arcais numbers and the Ramanujan tau function. In this way, we also deduce relations involving the Bell polynomials, the divisor function and the Ramanujan tau function. Finally, we highlight a connection between Cauchy and Dirichlet products.
\end{abstract}

\textbf{Keywords}: Bell polynomials; Convolution product; D'Arcais numbers; Ramanujan tau function

\textbf{MSC 2010:} 11A25, 11B75, 11T06, 13F25

\section{An isomorphism between the convolution product and the componentwise sum}

Given a commutative ring $(R,+,\cdot)$ with identity, let $H_R$ denote the set of all the sequences $\boldsymbol{a}=(a_n)_{n=1}^{\infty}=(a_1,a_2,a_3,...)$, with $a_n \in R$, for all $n \geq 1$. If we consider the operation componentwise sum in $H_R$, denoted by $+$, it is well known that $(H_R, +)$ is a group. Many other interesting operations between elements of $H_R$ can be defined. In the following, we focus on the convolution product (also called Cauchy product), defined by
$$\boldsymbol{a} *\boldsymbol{b} = \boldsymbol{c}, \quad c_{n+1} = \sum_{h=0}^n a_{h+1} b_{n-h+1}, \quad \forall n \geq 0,$$
given any $\boldsymbol{a}, \boldsymbol{b} \in H_R$. It is worth noting that $(H_R, +, *)$ is a ring with identity $\mathbb{1} = (1,0,0,0,...)$. Moreover, let us observe that given any $\boldsymbol{a}, \boldsymbol{b} \in H_R$ with ordinary generating functions $A(t)$ and $B(t)$, respectively, then $\boldsymbol{c} = \boldsymbol{a} * \boldsymbol{b}$ has o.g.f. $A(t)B(t)$.

\begin{remark}
The Cauchy product is strictly related to the binomial convolution product (also called Hurwitz product). Given $\boldsymbol{a}, \boldsymbol{b} \in H_R$, the Hurwitz product is defined as $\boldsymbol{a} \star \boldsymbol{b} = \boldsymbol{c}$, where
$$c_{n+1} = \sum_{h = 0}^n \binom{n}{h}a_{h+1}b_{n-h+1},$$
for all $n \geq 0$.
It is easy to see that the following map
$$\gamma: (H_R,+,*) \rightarrow (H_R,+,\star)$$
defined by $\gamma(\boldsymbol{a}) = \boldsymbol{b}$, where $b_{n+1} = n! a_{n+1}$, for all $n \geq 0$, is an isomorphism. The ring $(H_R,+,\star)$ is isomorphic to the Hurwitz series ring (see, e.g., \cite{Kei1}) that has been extensively studied. Some results on the Hurwitz series ring and on the binomial convolution product can be found in \cite{BCM, BCM2, Ben1, Ben3, Gha, Kei2, Sin, Tot}.
\end{remark}

A sequence $\boldsymbol{a} \in H_R$ is invertible with respect to $*$ if and only if $a_1 \in R$ is invertible with respect to $\cdot$. We denote $H_R^{*_C}$ the set of invertible elements of $H_R$ with respect to $*$. Thus, $(H_R^{*_C}, *)$ is a group whose identity is $(1,0,0,...)$, which is isomorphic to $(\tilde H_R, *)$, where $\tilde H_R$ is the set of sequences in $H_R$ starting with 1. Indeed, the map $f: H_R^{*_C} \rightarrow \tilde H_R$, such that $f(\boldsymbol{a}) = (1, a_1^{-1} \cdot a_2, a_1^{-1} \cdot a_3, ...)$, is injective and surjective. Moreover, since the componentwise multiplication is distributive with respect to $*$, we have that $f$ is an isomorphism. 

We can write an isomorphism $\phi$ between $(H_R, +)$ and $(\tilde H_R, *)$ defined by $\phi(\boldsymbol{a}) = \boldsymbol{u}$, where $\boldsymbol{u}$ is the sequence with o.g.f. 
$$U(t) = \sum_{h=0}^{\infty} u_{h+1}t^h = \prod_{k=1}^{\infty} (1 + t^k)^{a_k}.$$
Remembering that the Cauchy product between two sequences is equivalent to the product between their o.g.f., we clearly have
$$\phi(\boldsymbol{a} + \boldsymbol{b}) = \phi(\boldsymbol{a}) * \phi(\boldsymbol{b}),$$
given any $\boldsymbol{a}, \textbf{b} \in H_R$. We can also observe that the function $1 + t$ is the o.g.f. of the sequence $\boldsymbol{b}^{(1)} = (1,1,0,0,...)$ and $1 + t^k$ is the o.g.f. of the sequence $\boldsymbol{b}^{(k)} = (1,0,...,0,1,0,0,...)$, i.e., the sequence with 
$$b_1^{(k)} = b_{k+1}^{(k)} = 1$$
and 0 otherwise. These sequences are mutually independent, in the sense that $<\boldsymbol{b}^{(i)}> \cap <\boldsymbol{b}^{(j)}> = \{\mathbb 1\}$, with $i \not= j$, where $<\boldsymbol{a}>$ denotes the subgroup generated by $\boldsymbol{a} \in (\tilde H_R,*)$. Thus, we may consider the set $\{\boldsymbol{b}^{(1)}, \boldsymbol{b}^{(2)}, ...\}$ as a basis of $(\tilde H_R, *)$ and in this way it is clear that $\phi$ is an isomorphism.

\begin{proposition}
Given $\boldsymbol{a} \in H_R$, if $\boldsymbol{u} = \phi(\boldsymbol{a})$, then
$$u_{n+1} = \sum_{i_1 + 2i_2 + ... + ni_n = n}\binom{a_1}{i_1}\binom{a_2}{i_2}\cdots\binom{a_n}{i_n}$$
for any $n \geq 1$
\end{proposition}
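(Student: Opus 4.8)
The plan is to read off the coefficient $u_{n+1} = [t^n]\,U(t)$ directly from the product formula $U(t) = \prod_{k=1}^{\infty}(1+t^k)^{a_k}$ by expanding each factor as a power series and then collecting the terms of degree $n$. First I would apply the generalized binomial theorem to each factor. Adopting the standard convention $\binom{a_k}{i} = a_k(a_k-1)\cdots(a_k-i+1)/i!$ (which is precisely what makes $(1+t^k)^{a_k}$ meaningful as the formal series defining $\phi$), the substitution $s=t^k$ in $(1+s)^{a_k}=\sum_{i\ge 0}\binom{a_k}{i}s^i$ yields
$$(1+t^k)^{a_k} = \sum_{i_k=0}^{\infty}\binom{a_k}{i_k}\,t^{k\,i_k}.$$
Since each factor has constant term $1$, the infinite product is a well-defined element of the formal power series ring over $R$: the coefficient of any fixed power $t^n$ is determined by only finitely many factors, so there is no convergence issue to worry about beyond $t$-adic convergence.

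Next I would multiply these series together and track how a monomial $t^n$ is produced. Such a monomial arises exactly by selecting, for each $k\ge 1$, one summand $\binom{a_k}{i_k}t^{k i_k}$ in such a way that the exponents add up to $n$, i.e. $\sum_{k\ge 1} k\,i_k = n$. Summing over all admissible choices of the $i_k$ gives
$$u_{n+1} = [t^n]\,U(t) = \sum_{\substack{i_1,i_2,\dots\ge 0\\ \sum_{k} k i_k = n}}\ \prod_{k\ge 1}\binom{a_k}{i_k}.$$

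Finally I would truncate the index range: whenever $k>n$, the condition $\sum_k k i_k=n$ forces $i_k=0$, and since $\binom{a_k}{0}=1$ such factors drop out of the product. Hence only the indices $1\le k\le n$ survive, the constraint sharpens to $i_1 + 2 i_2 + \cdots + n i_n = n$, and the product reduces to $\binom{a_1}{i_1}\cdots\binom{a_n}{i_n}$, which is exactly the claimed formula. The combinatorics of matching exponents is entirely routine; the only point genuinely requiring care is the formal justification — namely that the term-by-term expansion of each factor and the rearrangement of the infinite product are legitimate in the power series ring over $R$ (guaranteed by $t$-adic convergence) and that the generalized binomial coefficients are read as elements of $R$ consistently with the definition of $\phi$. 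Once these conventions are fixed, the identity follows immediately.
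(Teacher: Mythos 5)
Your proposal is correct and follows the same route as the paper: the paper's proof likewise expands each factor $(1+t^k)^{a_k}$ by the generalized binomial theorem, multiplies the series, and reads off the coefficient of $t^n$ as a sum over solutions of $\sum_k k i_k = n$. The only difference is that you spell out the $t$-adic convergence and the truncation to $k \le n$, which the paper leaves implicit under ``the thesis easily follows.''
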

\begin{proof}
From
$$\prod_{k=1}^{\infty}(1+t^k)^{a_k} = \sum_{i_1=0}^{\infty}\sum_{i_2=0}^{\infty}\cdots\sum_{i_n=0}^{\infty}\cdots\binom{a_1}{i_1}\binom{a_2}{i_2}\cdots\binom{a_n}{i_n}\cdots t^{\sum_{j=1}^{\infty}ji_j}$$
the thesis easily follows.
\end{proof}

\begin{corollary}
Given $\boldsymbol{a} \in H_R$, let $\boldsymbol{u}$ be the sequence having o.g.f. $\prod_{k=1}^{\infty}(1-t^k)^{a_k}$. Then for all $n\geq1$
$$u_{n+1} = \sum_{i_1 + 2i_2 + ... + ni_n = n}(-1)^{i_1+i_2+...+i_n}\binom{a_1}{i_1}\binom{a_2}{i_2}\cdots\binom{a_n}{i_n}$$
\end{corollary}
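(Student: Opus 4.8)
The plan is to mirror the proof of the preceding Proposition, the only new ingredient being the bookkeeping of signs. First I would invoke the generalized binomial series as an identity of formal power series over $R$ (equivalently, over $R$ with the exponents treated as indeterminates), namely $(1+x)^{a_k} = \sum_{i_k \geq 0}\binom{a_k}{i_k}x^{i_k}$, and specialize it at $x = -t^k$. This produces the single-factor expansion
$$(1-t^k)^{a_k} = \sum_{i_k=0}^{\infty}(-1)^{i_k}\binom{a_k}{i_k}t^{ki_k},$$
which is precisely the factor appearing in the Proposition, decorated with the extra sign $(-1)^{i_k}$.

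Next I would multiply these expansions over all $k \geq 1$ and interchange product and sum exactly as in the Proposition, obtaining
$$\prod_{k=1}^{\infty}(1-t^k)^{a_k} = \sum_{i_1=0}^{\infty}\sum_{i_2=0}^{\infty}\cdots(-1)^{i_1+i_2+\cdots}\binom{a_1}{i_1}\binom{a_2}{i_2}\cdots\, t^{\sum_{j\geq1}ji_j}.$$
Extracting the coefficient of $t^n$ imposes the constraint $i_1 + 2i_2 + \cdots + ni_n = n$, which forces $i_j = 0$ for every $j > n$. Consequently the sign collapses from $(-1)^{\sum_{j} i_j}$ to $(-1)^{i_1+\cdots+i_n}$, and only the binomials $\binom{a_1}{i_1},\dots,\binom{a_n}{i_n}$ survive. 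Reading off $u_{n+1}$ as the coefficient of $t^n$ in $U(t)$ then yields the claimed formula.

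Since the Proposition is already available, the argument is essentially routine and presents no genuine obstacle; the only point deserving a moment of care is that the infinite sign product $(-1)^{\sum_{j}i_j}$ and the interchange of $\prod$ and $\sum$ are well defined at the level of formal power series, which holds because the degree constraint leaves only finitely many nonzero $i_j$ contributing to any fixed power of $t$. I note that one cannot obtain the Corollary from the Proposition by a clean global substitution $t \mapsto -t$, because that would replace $1-t^k$ by $1+(-1)^kt^k$ and spoil the even-indexed factors; the per-factor specialization $x=-t^k$ above is exactly what sidesteps this issue, so the direct re-expansion is the most transparent route.
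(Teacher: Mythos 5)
Your proposal is correct and follows exactly the route the paper intends: the paper states this Corollary without proof as an immediate consequence of the Proposition, whose proof is the same expansion $\prod_{k}(1-t^k)^{a_k}=\sum_{i_1,i_2,\ldots}(-1)^{i_1+i_2+\cdots}\binom{a_1}{i_1}\binom{a_2}{i_2}\cdots t^{\sum_j ji_j}$ followed by extraction of the coefficient of $t^n$. Your added remarks (finiteness of the nonzero $i_j$ under the degree constraint, and why a global substitution $t\mapsto -t$ would fail) are accurate but not needed beyond what the paper's argument already implies.
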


For the next proposition, we need to recall the definition of the partial ordinary Bell polynomials.

\begin{definition} \label{def:bell}
Let us consider the sequence of variables $X=(x_1,x_2,\ldots)$. The \emph{complete ordinary Bell polynomials} are defined by
$$B_0(X)=1, \quad \forall n\geq 1 \quad B_n(X)=B_n(x_1,x_2,\ldots,x_n)=\sum_{k=1}^nB_{n,k}^0(X),$$
where $B_{n,k}^0(X)$ are the \emph{partial ordinary Bell polynomials},  with
$$ B_{0,0}^0(X)=1, \quad \forall n\geq 1 \quad B_{n,0}^0(X)=0,\quad \forall k\geq 1 \quad B_{0,k}^0(X)=0,$$
$$B_{n,k}^0(X) =B_{n,k}^0(x_1,x_2,\ldots,x_{n}) =k!\sum_{\substack {i_1  + 2i_2  +  \cdots  + ni_n = n \\ i_1  + i_2  +  \cdots  + i_n  = k}} \prod_{j=1}^{n}\frac{x_j^{i_j }}{i_{j}!},$$
or, equivalently,
$$B_{n,k}^0(X)=B_{n,k}^0(x_1,x_2,\ldots,x_{n-k+1})=k!\sum_{\substack {i_1  + 2i_2  +  \cdots  + (n-k+1)i_{n-k+1} = n-k+1 \\ i_1  + i_2  +  \cdots  + i_{n-k+1}  = k}}\prod_{j=1}^{n-k+1}\frac{x_j^{i_j }}{i_{j}!},$$
satisfying the equality
$$ \left(\sum_{n\geq1}x_nz^n\right)^k=\sum_{n \geq k}B_{n,k}^0(X)z^n. $$
\end{definition}

\begin{proposition}
Let $\boldsymbol{a} \in H_R$ and $\boldsymbol{u} \in \tilde H_R$ be two sequences such that $\boldsymbol{u} = \phi(\boldsymbol{a})$. Then $a_{1}=u_{2}$ and
$$
a_{n}=\underset{\begin{array}{c}
1\leq k\leq n-1\\
k|n
\end{array}}{\sum}\frac{k}{n}\left(-1\right)^{\frac{n}{k}}a_{k}+\stackrel[h=1]{n}{\sum}\frac{\left(-1\right)^{h-1}}{h}B_{n,h}^{0}\left(u_{2},u_{3},\ldots,u_{n-h+2}\right)
$$
for any $n \geq 2$.
\end{proposition}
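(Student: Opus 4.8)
The plan is to invert the defining relation $U(t)=\prod_{k\geq1}(1+t^k)^{a_k}$ by passing to logarithms, and then to read off the coefficient of $t^n$ in $\log U(t)$ in two different ways: once directly from the product, which produces the divisor sum in the $a_k$, and once from the expansion $U(t)=1+\sum_{n\geq1}u_{n+1}t^n$, which produces the Bell polynomials in the $u_j$. Since the manipulations involve the coefficients $1/m$, $1/h$ and $k/n$, I would carry out the argument over $R\otimes\mathbb{Q}$ (equivalently, regard the relation as a universal identity in $\mathbb{Q}[a_1,a_2,\ldots]$ or $\mathbb{Q}[u_2,u_3,\ldots]$), where the formal logarithm of a power series with constant term $1$ is well defined.

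First, from the product I would write $\log U(t)=\sum_{k\geq1}a_k\log(1+t^k)$ and substitute $\log(1+x)=\sum_{m\geq1}\frac{(-1)^{m-1}}{m}x^m$. Collecting the coefficient $L_n:=[t^n]\log U(t)$ over all pairs with $km=n$ yields $L_n=\sum_{k\mid n}\frac{k}{n}(-1)^{n/k-1}a_k$. Isolating the term $k=n$, for which $n/k=1$ and the coefficient is $1$, so that it contributes exactly $a_n$, gives the recurrence $a_n=L_n+\sum_{k\mid n,\,k<n}\frac{k}{n}(-1)^{n/k}a_k$, using $-(-1)^{n/k-1}=(-1)^{n/k}$. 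This is precisely the first sum in the statement, so it remains only to identify $L_n$ with the Bell polynomial sum.

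Next, setting $V(t):=U(t)-1=\sum_{n\geq1}u_{n+1}t^n$ (recall $u_1=1$ since $\boldsymbol{u}\in\tilde H_R$), I would expand $\log U(t)=\log(1+V(t))=\sum_{h\geq1}\frac{(-1)^{h-1}}{h}V(t)^h$. Applying the defining identity of the partial ordinary Bell polynomials from Definition~\ref{def:bell} with $x_j=u_{j+1}$ gives $V(t)^h=\sum_{n\geq h}B_{n,h}^0(u_2,u_3,\ldots)t^n$, and the alternative form of $B_{n,h}^0$ shows it depends only on the variables $u_2,\ldots,u_{n-h+2}$. Extracting the coefficient of $t^n$, where only $h\leq n$ contributes, yields $L_n=\sum_{h=1}^n\frac{(-1)^{h-1}}{h}B_{n,h}^0(u_2,\ldots,u_{n-h+2})$, which is the second sum. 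Equating the two expressions for $L_n$ proves the recurrence for $n\geq2$; the base case $a_1=u_2$ follows from $L_1=B_{1,1}^0(u_2)=u_2$.

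The two logarithmic expansions are routine; the point needing care is the bookkeeping when interchanging summations and matching index ranges, in particular checking that $k=n$ is the unique term producing $a_n$ with coefficient $1$ and that truncating each $B_{n,h}^0$ to $u_2,\ldots,u_{n-h+2}$ is legitimate. The only genuinely delicate issue is the formal setting: the denominators force one to work in a $\mathbb{Q}$-algebra, so I would state at the outset that the identity is to be read in $R\otimes\mathbb{Q}$, noting that the left-hand side $a_n$ nonetheless lies in $R$ because $\boldsymbol{a}$ is the given preimage under $\phi$.
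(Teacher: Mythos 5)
Your proposal is correct and follows essentially the same route as the paper's own proof: take logarithms of $U(t)=\prod_{k\geq1}(1+t^k)^{a_k}$, expand the product side into the divisor sum $\sum_{k\mid n}\frac{k}{n}(-1)^{n/k-1}a_k$, expand $\log(1+V(t))$ via the partial ordinary Bell polynomials, equate coefficients of $t^n$, and isolate the $k=n$ term. Your explicit remark about working in $R\otimes\mathbb{Q}$ to justify the denominators is a point of care the paper leaves implicit, but it does not change the argument.
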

\begin{proof}
Since $u_1 = 1$, we have 
$$\log\left(1+\stackrel[n=1]{\infty}{\sum}u_{n+1}t^{n}\right)=\stackrel[h=1]{\infty}{\sum}\frac{\left(-1\right)^{h-1}}{h}\left(\stackrel[n=1]{\infty}{\sum}u_{n+1}t^{n}\right)^{h}=\stackrel[n=1]{\infty}{\sum}\left(\stackrel[h=1]{n}{\sum}\frac{\left(-1\right)^{h-1}}{h}B_{n,h}^{0}\left(u_{2},u_{3},\ldots,u_{n-h+2}\right)\right)t^{n}.$$
On the other hand, we have
$$
\stackrel[k=1]{\infty}{\sum}a_{k}\left(\log\left(1+t^{k}\right)\right)=\stackrel[k=1]{\infty}{\sum}a_{k}\stackrel[s=1]{\infty}{\sum}\frac{\left(-1\right)^{s-1}}{s}t^{ks}=\stackrel[s=1]{\infty}{\sum}\frac{\left(-1\right)^{s-1}}{s}\stackrel[k=1]{\infty}{\sum}a_{k}t^{sk}.
$$
Let us observe that the exponents of $t$ can be increasingly ordered starting from 1 by posing $sk = n$ and considering as coefficient of $t^n$ the following sum:
$$
\underset{\begin{array}{c}
1\leq k\leq n\\
k|n
\end{array}}{\sum}\frac{k}{n}\left(-1\right)^{\frac{n}{k}-1}a_{k}.
$$
Hence, we get
\begin{equation} \label{eq:10}
\underset{\begin{array}{c}
1\leq k\leq n\\
k|n
\end{array}}{\sum}\frac{k}{n}\left(-1\right)^{\frac{n}{k}-1}a_{k}=\stackrel[h=1]{n}{\sum}\frac{\left(-1\right)^{h-1}}{h}B_{n,h}^{0}\left(u_{2},u_{3},\ldots,u_{n-h+2}\right)
\end{equation}
from which the thesis follows.
\end{proof}

\begin{corollary}
Let $\boldsymbol{a} \in H_R$ and $\boldsymbol{u} \in \tilde H_R$ be two sequences such that the o.g.f. of $\boldsymbol{u}$ is $\prod_{k=1}^{\infty}(1-t^k)^{a_k}$. Then $a_{1}=-u_{2}$ and for all $n\geq2$
\begin{equation} \label{eq:12}
a_{n}=-\underset{\begin{array}{c}
1\leq k\leq n-1\\
k|n
\end{array}}{\sum}\frac{k}{n}a_{k}-\stackrel[h=1]{n}{\sum}\frac{\left(-1\right)^{h-1}}{h}B_{n,h}^{0}\left(u_{2},u_{3},\ldots,u_{n-h+2}\right).
\end{equation}
\end{corollary}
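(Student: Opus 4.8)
The plan is to reproduce the argument of the preceding Proposition, deviating from it only at the single point where the two generating functions differ, namely the sign inside the factors $(1\pm t^k)$. First I would take the formal logarithm of the defining product, writing $\log U(t)=\sum_{k=1}^{\infty}a_k\log\left(1-t^k\right)$, and then compare the coefficients of $t^n$ on the two sides. The left-hand side is handled exactly as in the Proposition: since $u_1=1$ we may write $U(t)=1+\sum_{n\geq 1}u_{n+1}t^n$, and the expansion of $\log(1+x)$ together with the defining identity of the partial ordinary Bell polynomials (Definition \ref{def:bell}) shows that the coefficient of $t^n$ in $\log U(t)$ equals $\sum_{h=1}^{n}\frac{(-1)^{h-1}}{h}B_{n,h}^{0}(u_2,\ldots,u_{n-h+2})$. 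Crucially, this computation depends only on $\boldsymbol{u}$ and so is insensitive to the change of sign.

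The essential modification occurs on the right-hand side, where $\log\left(1+t^k\right)$ is now replaced by $\log\left(1-t^k\right)=-\sum_{s\geq1}\frac{t^{ks}}{s}$. In contrast to the Proposition, the alternating factor $(-1)^{s-1}$ disappears and is simply replaced by $-1$. Collecting powers by setting $ks=n$, so that $s=n/k$ and $\frac{1}{s}=\frac{k}{n}$, I would find that the coefficient of $t^n$ on the right becomes $-\sum_{k\mid n}\frac{k}{n}a_k$, now carrying no factor $(-1)^{n/k-1}$. Equating this with the Bell-polynomial expression obtained above yields the exact analogue of \eqref{eq:10}.

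To conclude, I would isolate the summand $k=n$ in the divisor sum, which contributes $-\frac{n}{n}a_n=-a_n$, and move the remaining terms to the other side to solve for $a_n$; this produces precisely \eqref{eq:12}. The case $n=1$ is treated separately: there $B_{1,1}^{0}(u_2)=u_2$ and the divisor sum reduces to its single term $k=1$, giving $-a_1=u_2$, i.e.\ $a_1=-u_2$. I do not anticipate any genuine obstacle here, since the statement is a sign-variant of the Proposition; the only care required is the correct bookkeeping of the two effects that distinguish the two proofs, namely the vanishing of the alternation $(-1)^{n/k-1}$ on the right-hand side and the sign arising when the leading divisor term $k=n$ is extracted, which must combine to reproduce the exact form of \eqref{eq:12}.
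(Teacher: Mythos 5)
Your proposal is correct and follows exactly the route the paper intends: the corollary is stated without proof precisely because it is the sign-variant of the preceding Proposition, obtained by replacing $\log(1+t^k)$ with $\log(1-t^k)=-\sum_{s\geq1}t^{ks}/s$ so that the divisor sum loses its alternating factor, and then isolating the $k=n$ term. Your bookkeeping of both sign effects, including the $n=1$ case giving $a_1=-u_2$, is accurate.
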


If we exploit the relation between partial ordinary Bell polynomials and ordinary Bell polynomials, we can obtain

$$\stackrel[h=1]{n}{\sum}\frac{\left(-1\right)^{h-1}}{h}B_{n,h}^{0}\left(u_{2},u_{3}\ldots,u_{n-h+2}\right)=\frac{1}{n!}\stackrel[h=1]{n}{\sum}\left(-1\right)^{h-1}\left(h-1\right)!B_{n,h}\left(1!u_{2},2!u_{3},\ldots,\left(n-h+1\right)!u_{n-h+2}\right).$$

We also recall the inversion formula of the partial Bell polynomials, i.e.,

$$x_{n}=\stackrel[h=1]{n}{\sum}\left(-1\right)^{h-1}\left(h-1\right)!B_{n,h}\left(y_{1},y_{2},\ldots,y_{n-h+1}\right)\Longleftrightarrow y_{n}=\stackrel[h=1]{n}{\sum}B_{n,h}\left(x_{1},x_{2},\ldots,x_{n-h+1}\right).$$

Applying the last two equations to \eqref{eq:10} and \eqref{eq:12}, we get for all $n\geq 1$

$$
u_{n+1}=\frac{1}{n!}\stackrel[h=1]{n}{\sum}B_{n,h}\left(\hat{a_{1}},\hat{a_{2}},\ldots,\hat{a}_{n-h+1}\right)\Longleftrightarrow\hat{a_{n}}=n!\underset{\begin{array}{c}
1\leq k\leq n\\
k|n
\end{array}}{\sum}\frac{k}{n}\left(-1\right)^{\frac{n}{k}-1}a_{k}
$$
and
\begin{equation} \label{eq:unbell2}
u_{n+1}=\frac{1}{n!}\stackrel[h=1]{n}{\sum}B_{n,h}\left(\bar{a_{1}},\bar{a}_{2},\ldots,\bar{a}_{n-h+1}\right)\Longleftrightarrow\bar{a}_{n}=-n!\underset{\begin{array}{c}
1\leq k\leq n\\
k|n
\end{array}}{\sum}\frac{k}{n}a_{k}.
\end{equation}

In the next section, we define a transform that together with $\phi$ allows us to introduce an interesting family of polynomials connected to the Ramanujan tau function.

\section{A recursive transform}

\begin{definition} \label{def:autogen}
Given a sequence $\boldsymbol{a} \in H_R$, we define a map $\alpha: H_R \rightarrow H_R$ such that $\alpha(\boldsymbol{a}) = \boldsymbol{b}$ is recursively defined by the following rules:
\begin{enumerate}
\item $b_1 = a_1$, $b_2 = a_2$;
\item for any $i \geq 2$, let $j$ be the first index such that $b_j = a_i$, then $b_{j+k} = b_k$, for $k=1,2,...,j - 1$, and $b_{2j} = a_{i+1}$.
\end{enumerate}
\end{definition}

\begin{example}
Let us consider the sequence $\boldsymbol{a} = (a_1, a_2, a_3, ...)$, by the first rule in Definition \ref{def:autogen} we know that the first two elements of $\boldsymbol{b} = \alpha(\boldsymbol{a})$ are $a_1$ and $a_2$, i.e.,
$$\boldsymbol{b} = (a_1, a_2, ...).$$
Applying the second rule for $i = 2$ we get that $b_3 = a_1$ and $b_4 = a_3$:
$$\boldsymbol{b} = (a_1, a_2, a_1, a_3, ...).$$
Now, considering $i = 3$ (always in the second rule of Definition \ref{def:autogen}) we get that after the first occurrence of $a_3$ in $\boldsymbol{b}$ we have the elements $a_1, a_2, a_1$ that precede $a_3$ itself and after that we will have $a_4$:
$$\boldsymbol{b} = (a_1, a_2, a_1, a_3, a_1, a_2, a_1, a_4...),$$
and so on.
\end{example}

By Definition \ref{def:autogen}, we can write the elements of $\boldsymbol{b} = \alpha(\boldsymbol{a})$ as follows:
$$b_{2^{t - 1}(2 k - 1)} = a_t,$$
for $t = 1, 2, ...$ and any $k \geq 1$.

From now on, we will focus on $R = \mathbb{Z}$. We can see that the transform $\alpha$ and the isomorphism $\phi$ appear to work in an interesting way on integer sequences, highlighting interesting combinatorial aspects. In the next propositions, we prove some new relations among integer sequences by means of $\alpha$ and $\phi$. Moreover, these connections will allow us to introduce in an original way a very interesting family of polynomials connected to the D'Arcais numbers and the Ramanujan tau function.

\begin{proposition}
Let $\boldsymbol{r} = (r_n)_{n=1}^{\infty} = (2, 3, 2, 4, 2, 3, 2, 5, ...)$ be the sequence that counts the number of divisors of $2n$ of the form $2^k$ (sequence A085058 in OEIS \cite{oeis}; note that in OEIS sequences start with index 0 whereas our sequences start with index 1). Given the sequence $\boldsymbol{a}=(2, 3, 4, 5, 6, ...)$, we have
$$\alpha(\boldsymbol{a}) = \boldsymbol{r}.$$
\end{proposition}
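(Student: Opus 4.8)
The plan is to avoid unwinding the recursion in Definition \ref{def:autogen} directly and instead exploit the closed-form description of $\alpha$ established immediately above the proposition, namely $b_{2^{t-1}(2k-1)} = a_t$ for all $t \geq 1$ and $k \geq 1$. Here the input is $\boldsymbol{a} = (2,3,4,5,\ldots)$, i.e. $a_t = t+1$, so this closed form reads $b_{2^{t-1}(2k-1)} = t+1$, and the whole argument becomes a matter of identifying the left-hand index with a familiar arithmetic quantity.

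The key step I would carry out is to invoke the uniqueness of the $2$-adic decomposition: every positive integer $n$ can be written in exactly one way as $n = 2^{t-1}(2k-1)$ with $t \geq 1$ and $k \geq 1$, where $t-1 = v_2(n)$ is the $2$-adic valuation of $n$ and $2k-1 = n/2^{v_2(n)}$ is its odd part. Substituting this into the closed form gives, for every $n \geq 1$,
$$b_n = a_{v_2(n)+1} = v_2(n) + 2.$$
This identification is the only genuine manipulation in the proof, and it reduces everything to bookkeeping about $v_2(n)$.

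It then remains to compute $r_n$ directly from its definition and check it agrees. The divisors of $2n$ that are powers of two are precisely $2^0, 2^1, \ldots, 2^{v_2(2n)}$, since $2^j \mid 2n$ holds exactly when $j \leq v_2(2n)$; because $v_2(2n) = v_2(n)+1$, there are $v_2(n)+2$ of them, whence $r_n = v_2(n)+2$. Comparing with the expression for $b_n$ gives $b_n = r_n$ for all $n \geq 1$, which is the claim. I do not expect any real obstacle here: once the closed form for $\alpha$ is in hand, both sequences collapse to the same elementary function of $v_2(n)$, and the only point requiring care is the off-by-one matching between the index shift $a_t = t+1$, the valuation convention $t-1 = v_2(n)$, and the extra factor $2$ in $2n$ that supplies the final $+1$ to the valuation.
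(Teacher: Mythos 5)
Your proof is correct, but it takes a genuinely different route from the paper's. You rely entirely on the closed form $b_{2^{t-1}(2k-1)} = a_t$ stated just before the proposition, combine it with the uniqueness of the decomposition $n = 2^{t-1}(2k-1)$ (so that $t-1$ is the $2$-adic valuation $v_2(n)$), and reduce both $b_n$ and $r_n$ to the same elementary expression $v_2(n)+2$; this is a pointwise, purely arithmetic verification. The paper instead argues analytically with ordinary generating functions: it expresses $R(t)$ via the known o.g.f.\ $\sum_{k=0}^{\infty} t^{2^k}/(1-t^{2^k})$ of the valuation sequence $\boldsymbol{q}$, sets up the recursion $G_n(t) = (1+t^{2^{n-1}})G_{n-1}(t) + t^{2^n}$ for the partial sums of the o.g.f.\ of $\alpha(\boldsymbol{a})$, and telescopes twice, letting $n \to \infty$ for $\lvert t \rvert < 1$, to conclude $G(t) = R(t)$. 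Your argument is shorter and more elementary: it avoids all convergence considerations and makes the role of the $2$-adic valuation transparent. Its only external input is the closed form for $\alpha$, which the paper asserts from Definition \ref{def:autogen} without proof; note, though, that the paper's own key step (the recursion for $G_n$, declared ``easy to check'') is an assertion of essentially the same block structure, so neither proof is worse off on this point, and in either case a short induction on the blocks of length $2^n$ would make it airtight. What the paper's computation buys in exchange for its length is the explicit generating-function identity $G(t)=R(t)$, which matches the o.g.f.\ viewpoint used for $\phi$ and in the subsequent propositions.
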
 
\begin{proof}
Let $\boldsymbol{q} = (q_n)_{n=1}^{\infty} = (1, 2, 1, 3, 1, 2, 1, 4, 1, 2, 1, 3, 1, ...)$ be the sequence of 2--adic valuations of $2n$ (sequence A001511 in OEIS). It is well known that  $\sum_{h=1}^{\infty}q_{h}t^{h}=\sum_{k=0}^{\infty} \cfrac{t^{2^k}}{1 - t^{2^k}}$ and $\boldsymbol{r} = \boldsymbol{q} + (1, 1, 1, ...)$. Thus, 
$$ R(t) =\sum_{h=1}^{\infty}r_{h}t^{h}=\sum_{h=1}^{\infty}(q_{h}+1)t^{h}=\sum_{k=0}^{\infty} \cfrac{t^{2^k}}{1 - t^{2^k}} + \cfrac{t}{1 - t}. $$ 
Let $G_n(t)=\sum_{h=1}^{2^{n}}\alpha(\boldsymbol{a})[h]t^h$ where, for $h\geq 1$, $\alpha(\boldsymbol{a})[h]$ is the $h$--th element of $\alpha(\boldsymbol{a})$. We have 
\begin{itemize}
\item $G_0(t) = 2t$
\item $G_1(t) = 2t + 3t^2 = (1 + t) G_0(t) + t^2$
\item $G_2(t) = 2t +3t^2+2t^3+4t^4=(1 + t^2) G_1(t) + t^4$.
\end{itemize}
In general, remembering that $\boldsymbol{a}=(2, 3, 4, 5, 6, ...)$ and by definition of $\alpha$, it is easy to check that
$$G_n(t) = (1 + t^{2^{n-1}}) G_{n-1}(t) + t^{2^n}.$$
From this, we get
$$G_{n+1}(t) - G_n(t^2) = (1 + t^{2^n}) (G_n(t) - G_{n-1}(t^2))$$
from which
$$G_{n+1}(t) - G_n(t^2) = (1 + t^{2^n})(1 + t^{2^{n-1}})...(1 + t^2)(G_1(t) - G_0(t^2)) = \cfrac{2t+t^2}{1 - t^2}(1 - t^{2^{n+1}}).$$
Now, if we consider $n \rightarrow + \infty$ (for $\lvert t \rvert < 1$), we have
$$G(t) - G(t^2) = \frac{2t+t^2}{1 - t^2}=\frac{t}{1-t}+\frac{1}{1-t}-\frac{1}{1-t^2}.$$
Thus, given the following differences
$$\begin{cases} G(t) - G(t^2) = \cfrac{t}{1 - t} + \cfrac{1}{1 - t} - \cfrac{1}{1 - t^2} \cr G(t^2) - G(t^4) = \cfrac{t^2}{1 - t^2} + \cfrac{1}{1 - t^2} - \cfrac{1}{1 - t^4} \cr ... \cr G(t^{2^n}) - G(t^{2^{n+1}}) = \cfrac{t^{2^n}}{1 - t^{2^n}} + \cfrac{1}{1 - t^{2^n}} - \cfrac{1}{1 - t^{2^{n+1}}} \end{cases}$$
and finally we obtain
$$ G(t) - G(t^{2^{n+1}}) = \sum_{k=0}^{\infty}\cfrac{t^{2^k}}{1 - t^{2^k}} + \cfrac{1}{1 - t} - \cfrac{1}{1 - t^{2^{n+1}}}.$$
For $n \rightarrow \infty$ (for $\lvert t \rvert < 1$), we have $G(t) = R(t)$ (note that $G(0) = 0$).
\end{proof}

\begin{corollary}\label{corn}
With the notation of the previous proposition, if $\boldsymbol{n}=(1,2,3,\dots)$ is the sequence of positive integers, we have $\alpha(\boldsymbol{n}) = \boldsymbol{q}$.
\end{corollary}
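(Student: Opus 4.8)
The plan is to deduce the statement directly from the explicit positional description of $\alpha$ recorded right after Definition \ref{def:autogen}, namely $\alpha(\boldsymbol{a})[2^{t-1}(2k-1)] = a_t$ for every $t\geq 1$ and $k\geq 1$, which holds for an arbitrary sequence $\boldsymbol{a}\in H_R$. Specializing this to $\boldsymbol{a}=\boldsymbol{n}=(1,2,3,\dots)$, for which $n_t=t$, the value of $\alpha(\boldsymbol{n})$ at position $2^{t-1}(2k-1)$ is simply $t$. So the entire argument reduces to reading off, for each index $m\geq 1$, which value $t$ the entry $\alpha(\boldsymbol{n})[m]$ carries.

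Next I would invoke the elementary fact that every positive integer $m$ admits a unique factorization $m=2^{t-1}(2k-1)$ with $t\geq 1$ and $k\geq 1$, where $t-1$ is precisely the $2$-adic valuation $v_2(m)$ and $2k-1$ is the odd part of $m$. Feeding this into the positional formula gives $\alpha(\boldsymbol{n})[m]=n_t=t=v_2(m)+1$ for every $m\geq 1$. It then remains only to recognize the right-hand side as $q_m$: since $\boldsymbol{q}$ is the sequence of $2$-adic valuations of $2n$, we have $q_m=v_2(2m)=v_2(m)+1$, and the two expressions coincide, which closes the argument.

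For a proof that makes the ``corollary'' label transparent, I would instead exploit the previous proposition together with the additivity of $\alpha$ that the same positional formula makes manifest: because $\alpha(\boldsymbol{a})[m]=a_{v_2(m)+1}$ depends linearly on the entries of $\boldsymbol{a}$ at a position-determined index, one has $\alpha(\boldsymbol{a}+\boldsymbol{c})=\alpha(\boldsymbol{a})+\alpha(\boldsymbol{c})$. Writing $\boldsymbol{a}=(2,3,4,\dots)=\boldsymbol{n}+\boldsymbol{1}$ with $\boldsymbol{1}=(1,1,1,\dots)$, and noting $\alpha(\boldsymbol{1})=\boldsymbol{1}$ (every position receives the constant value $1$), the previous proposition yields $\boldsymbol{r}=\alpha(\boldsymbol{a})=\alpha(\boldsymbol{n})+\boldsymbol{1}$; combining this with the identity $\boldsymbol{r}=\boldsymbol{q}+\boldsymbol{1}$ recalled in that proof gives $\alpha(\boldsymbol{n})=\boldsymbol{q}$ at once.

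The computation is short, so there is no serious obstacle; the only points deserving care are bookkeeping ones. First, I must respect the indexing convention (sequences here start at index $1$, unlike OEIS), so that ``position $m$'' really corresponds to the valuation of $m$ and not of $m-1$. Second, if I take the additive route I should make explicit that it is the positional formula, and not the literal value-matching clause of Definition \ref{def:autogen}, that guarantees $\alpha$ is additive, since the recursive rule is phrased in terms of equality of values and could in principle behave differently on sequences with repeated entries; both $\boldsymbol{n}$ and $\boldsymbol{a}=(2,3,4,\dots)$ have strictly increasing, hence distinct, entries, so this caveat does not actually bite here.
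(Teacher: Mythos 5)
Your proposal is correct. Note that the paper states this corollary with no written proof at all, so the only thing to compare against is the implicit derivation suggested by the label ``corollary'': namely, combine the preceding proposition ($\alpha(2,3,4,\dots)=\boldsymbol{r}$) with the relation $\boldsymbol{r}=\boldsymbol{q}+(1,1,1,\dots)$ recalled in its proof, and observe that lowering every entry of the input by $1$ lowers every entry of the output by $1$ --- which is exactly your second route. Your first route (specializing the positional formula $b_{2^{t-1}(2k-1)}=a_t$ to $\boldsymbol{n}$, using the unique factorization $m=2^{v_2(m)}\cdot(\mathrm{odd})$, and identifying $q_m=v_2(2m)=v_2(m)+1$) is even more direct: it proves the statement from scratch without invoking the proposition, and in fact it would equally well reprove the proposition itself. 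The most valuable part of your write-up is the caveat you raise at the end: the recursive clause of Definition \ref{def:autogen} matches \emph{values}, so for sequences with repeated entries (e.g.\ the constant sequence $(1,1,1,\dots)$, where the ``first index $j$ with $b_j=a_i$'' is always $j=1$) the recursion never advances and $\alpha$ is not honestly defined by it; the additivity used in your second route therefore rests on the positional formula, not on the recursion. Since both $\boldsymbol{n}$ and $(2,3,4,\dots)$ have distinct entries, this does not affect the corollary, but it is a genuine gap in the paper's setup that your proof correctly routes around.
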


\begin{proposition} \label{prop:fa}
With the notation of the previous proposition, we have $\phi(\boldsymbol{q}) = \boldsymbol{p}$, where $\boldsymbol{p} = (p_n)_{n=1}^{\infty} = (1, 1, 2, 3, 5, 7, ...)$ counts the number of partitions of $n - 1$ (sequence A000041 in OEIS).
\end{proposition}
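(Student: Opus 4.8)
The plan is to work entirely at the level of ordinary generating functions. By the definition of $\phi$, the sequence $\phi(\boldsymbol{q})$ has o.g.f. $\prod_{k=1}^{\infty}(1+t^k)^{q_k}$, while the target sequence $\boldsymbol{p}$, whose $n$-th term is the number of partitions of $n-1$, has o.g.f. $\sum_{h\ge0}p_{h+1}t^h=\prod_{k=1}^{\infty}(1-t^k)^{-1}$, the classical Euler partition generating function. So the entire statement reduces to proving the identity
$$\prod_{k=1}^{\infty}(1+t^k)^{q_k}=\prod_{k=1}^{\infty}\frac{1}{1-t^k}$$
as formal power series.

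First I would re-read the exponent $q_k$. Since $q_k$ is the $2$-adic valuation of $2k$, we have $q_k=v_2(k)+1=\#\{\,j\ge 0 : 2^j\mid k\,\}$. Substituting this into the left-hand product and interchanging the two products (legitimate, as each coefficient is determined by finitely many factors) rewrites it as $\prod_{j\ge0}\prod_{k:\,2^j\mid k}(1+t^k)$; putting $k=2^j\ell$ this becomes $\prod_{j\ge0}\prod_{\ell\ge1}\bigl(1+t^{2^j\ell}\bigr)$.

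The next step is Euler's classical identity $\prod_{\ell\ge1}(1+x^\ell)=\prod_{m\ \mathrm{odd}}(1-x^m)^{-1}$ (partitions into distinct parts versus partitions into odd parts). Applying it with $x=t^{2^j}$ to each inner product gives $\prod_{j\ge0}\prod_{m\ \mathrm{odd}}\bigl(1-t^{2^j m}\bigr)^{-1}$. Finally, invoking the unique factorization of every positive integer as $2^j m$ with $m$ odd lets me re-index and collapse the double product to $\prod_{\ell\ge1}(1-t^\ell)^{-1}$, which is exactly the right-hand side, completing the argument.

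The main obstacle, and the only genuinely nontrivial ingredient, is spotting and justifying the rewriting $q_k=\#\{j\ge0:2^j\mid k\}$ together with the ensuing interchange of products; once that is in place, the rest is a chain of standard partition identities. Should the index bookkeeping prove delicate, an alternative route is to take logarithms and compare coefficients of $t^n$, reducing the claim to the arithmetic identity $\sum_{d\mid n}(v_2(d)+1)\,d\,(-1)^{n/d-1}=\sigma(n)$; this can be verified by writing $n=2^a m$ with $m$ odd, using multiplicativity of $\sigma$, and evaluating the elementary sum $\sum_{b=0}^{a}(b+1)2^b$. I expect the first, purely multiplicative argument to be the cleaner one to present.
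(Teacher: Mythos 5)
Your proof is correct, but it takes a genuinely different route from the paper's. The paper exploits the recursive structure of $\boldsymbol{q}$ (namely $q_{2n-1}=1$, $q_{2n}=q_n+1$) to derive a functional equation for $f(t)=\prod_{k=1}^{\infty}(1+t^k)^{q_k}$, namely $f(t)=\prod_{k=1}^{\infty}(1+t^k)\,f(t^2)$; it then telescopes the ratios $f(t^{2^{i-1}})/f(t^{2^i})$, uses the finite identity $\prod_{i=1}^{n}(1+t^{2^{i-1}k})=(1-t^{2^nk})/(1-t^k)$, and lets $n\to\infty$ for $\lvert t\rvert<1$ to conclude $f(t)=\prod_{k=1}^{\infty}(1-t^k)^{-1}$. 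You instead use the closed form of the exponent --- $q_k$ equals the number of powers $2^j$ dividing $k$ --- to unfold the product into the double product $\prod_{j\ge 0}\prod_{\ell\ge 1}\bigl(1+t^{2^j\ell}\bigr)$, apply Euler's distinct-parts/odd-parts identity to each inner factor, and collapse the result via the unique factorization of every positive integer as $2^j m$ with $m$ odd. Your argument stays entirely inside the ring of formal power series: the only point requiring care is the interchange of infinite products, which you correctly justify by noting that each coefficient involves finitely many factors, and this lets you avoid the paper's analytic limit with $\lvert t\rvert<1$ altogether. What the paper's method buys in exchange is uniformity: the same functional-equation-plus-telescoping template is exactly what the paper uses to prove the preceding proposition about $\alpha(\boldsymbol{a})=\boldsymbol{r}$, and it works directly from the recursion that defines $\boldsymbol{q}$ via the transform $\alpha$, whereas your method needs the explicit arithmetic description of $q_k$ as a first step. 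Both proofs are complete and correct; yours is arguably the cleaner and more self-contained of the two.
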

\begin{proof}
The elements of the sequence $\boldsymbol{q}$ satisfy the following conditions:
$$\begin{cases} q_{2n - 1} = 1 \cr q_{2n} = q_n + 1 \end{cases}$$
for all $n \geq 1$. The o.g.f. of $\phi(\boldsymbol{q})$ is
$$f(t) = \prod_{k = 1}^{\infty}(1 + t^k)^{q_k} = \prod_{s = 1}^{\infty} (1 + t^{2s})^{q_{2s}}\prod_{s = 1}^{\infty} (1 + t^{2s - 1})^{q_{2s-1}}=\prod_{k=1}^{\infty}(1+t^k) f(t^2),$$
from which we get
$$\frac{f(t^{2^{i-1}})}{f(t^{2^i})} = \prod_{k=1}^{\infty}(1 + t^{2^{i-1}k}).$$
Noting that
$$\cfrac{f(t)}{f(t^{2^n})} = \prod_{i=1}^n\frac{f(t^{2^{i-1}})}{f(t^{2^i})} = \prod_{k=1}^{\infty}\left( \prod_{i=1}^n(1 + t^{2^{i-1}k}) \right),$$
we obtain
$$\cfrac{f(t)}{f(t^{2^n})} = \prod_{k=1}^{\infty}\cfrac{1-t^{2^nk}}{1-t^k}.$$
Now, for $n \rightarrow \infty$ (with $\lvert t \rvert < 1$), we have $f(t) = \prod_{k=1}^{\infty}\cfrac{1}{1-t^k}$ that is the o.g.f. of $\boldsymbol p$.

\end{proof}

\begin{proposition}
With the notation of the previous propositions, we have that $\phi(\boldsymbol{r}) = \boldsymbol{\tilde{p}}$, where $\boldsymbol{\tilde{p}} = (\tilde{p}_n)_{n=1}^{\infty} = (1, 2, 4, 8, 14, ...)$ is the sequence that counts the number of partitions of $n$ where there are two kinds of odd parts (see sequence A015128 in OEIS).
\end{proposition}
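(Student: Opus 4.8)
The plan is to reduce the claim to the already-established identity $\phi(\boldsymbol{q})=\boldsymbol{p}$ from Proposition \ref{prop:fa}, together with the additive structure of $\boldsymbol{r}$. Recall that $\boldsymbol{r}=\boldsymbol{q}+(1,1,1,\dots)$ in $(H_R,+)$, as noted in the previous propositions. Since $\phi$ is an isomorphism between $(H_R,+)$ and $(\tilde H_R,*)$, it converts this componentwise sum into a Cauchy product:
$$\phi(\boldsymbol{r})=\phi\bigl(\boldsymbol{q}+(1,1,1,\dots)\bigr)=\phi(\boldsymbol{q})*\phi\bigl((1,1,1,\dots)\bigr).$$
Hence it suffices to compute the two factors on the right and multiply their ordinary generating functions, since the Cauchy product corresponds to the product of o.g.f.'s.

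For the first factor, Proposition \ref{prop:fa} gives that $\phi(\boldsymbol{q})=\boldsymbol{p}$ has o.g.f. $\prod_{k=1}^{\infty}\frac{1}{1-t^k}$. For the second factor, applying the definition of $\phi$ directly to the constant sequence $(1,1,1,\dots)$ yields the o.g.f. $\prod_{k=1}^{\infty}(1+t^k)^{1}=\prod_{k=1}^{\infty}(1+t^k)$. Multiplying, the o.g.f. of $\phi(\boldsymbol{r})$ is
$$\prod_{k=1}^{\infty}\frac{1}{1-t^k}\cdot\prod_{k=1}^{\infty}(1+t^k)=\prod_{k=1}^{\infty}\frac{1+t^k}{1-t^k}.$$

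It then remains to recognize this product as the o.g.f. of $\boldsymbol{\tilde{p}}$. The key manipulation is the classical identity $1+t^k=\frac{1-t^{2k}}{1-t^k}$, which after cancelling the common even factors between numerator and denominator gives $\prod_{k=1}^{\infty}(1+t^k)=\prod_{j\ \mathrm{odd}}\frac{1}{1-t^j}$; substituting this above turns the o.g.f. into $\prod_{j\ \mathrm{odd}}\frac{1}{(1-t^j)^2}\prod_{k\ \mathrm{even}}\frac{1}{1-t^k}$, which is exactly the generating function counting partitions in which each odd part comes in two kinds and each even part in one. This final identification — rewriting $\prod_{k=1}^{\infty}\frac{1+t^k}{1-t^k}$ in the two-colored form and matching it with the combinatorial description of A015128 — is the only nontrivial step; everything preceding it is a mechanical consequence of the homomorphism property of $\phi$ and Proposition \ref{prop:fa}.
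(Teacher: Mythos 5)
Your proof is correct and follows essentially the same route as the paper: both use $\boldsymbol{r}=\boldsymbol{q}+(1,1,1,\dots)$ together with $\phi(\boldsymbol{q})=\boldsymbol{p}$ to split the o.g.f.\ of $\phi(\boldsymbol{r})$ as $\prod_{k=1}^{\infty}(1+t^k)\cdot\prod_{k=1}^{\infty}(1+t^k)^{q_k}=\prod_{k=1}^{\infty}\frac{1+t^k}{1-t^k}$ (your explicit use of the homomorphism property is just the paper's exponent manipulation $r_k=q_k+1$ phrased abstractly). The only difference is that you verify the final identification with A015128 via Euler's identity $\prod_k(1+t^k)=\prod_{j\ \mathrm{odd}}(1-t^j)^{-1}$, whereas the paper simply cites OEIS for that step.
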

\begin{proof}
Since $\boldsymbol{p} = \phi(\boldsymbol{q})$ and $\boldsymbol{r} = \boldsymbol{q} + (1, 1, 1, ...)$, we have
$$\prod_{k=1}^{\infty}\cfrac{1}{1-t^k} = \prod_{k=1}^{\infty}(1+t^k)^{q_k}.$$
Thus, the o.g.f. of $\phi(\boldsymbol{r})$ is
$$\prod_{k=1}^{\infty}(1+t^k)^{r_k} = \prod_{k=1}^{\infty}(1+t^k)^{q_k+1}=\prod_{k=1}^{\infty}(1+t^k)\prod_{k=1}^{\infty}(1+t^k)^{q_k}=\prod_{k=1}^{\infty}\cfrac{1+t^k}{1-t^k},$$
that is the o.g.f. of $\boldsymbol{\tilde{p}}$ (see, e.g., sequence A015281 in OEIS \cite{oeis}).
\end{proof} 

\section{The D'Arcais numbers and the Ramanujan tau function }

As a consequence of Corollary \ref{corn} and  Proposition \ref{prop:fa}, we have  that $\boldsymbol{p}=\phi(\boldsymbol{q})=\phi(\alpha(\boldsymbol{n}))$ has o.g.f. $\prod_{k=1}^{\infty}\cfrac{1}{1 - t^k}$. Thus introducing  a variable $x$, the o.g.f. of the polynomial sequence $\phi(\alpha(x\boldsymbol{n}))=\phi(\alpha(x, 2x, 3x, ...))=(\varphi_{n}(x))_{n=1}^{\infty}$ clearly is
$$d(t) = \prod_{k=1}^{\infty} \cfrac{1}{(1 - t^k)^x}.$$

\begin{example}
For $n = 1, 2, 3, 4, 5$, the polynomials above introduced are
$$\varphi_1(x) = 1$$
$$\varphi_2(x) = x$$
$$\varphi_3(x) = \frac{1}{2}x(x + 3)$$
$$\varphi_4(x) = \frac{1}{6}x(x^2 + 9x + 8)$$
$$\varphi_5(x) = \frac{1}{24}x(x^3 + 18x^2 + 59x + 42).$$
\end{example}

The coefficients of the polynomials $\varphi_n(x)$ are given by the D'Arcais numbers (\cite{Comtet}, pag. 159). These polynomials seem to be very interesting, since many important sequences appear as special cases. For instance, we have seen that $\varphi_n(1)_{n=1}^{\infty}$ is the fundamental sequence that counts the number of partitions of an integer $n$ (sequence A000041 in OEIS). Moreover, $\varphi_n(-1)_{n=1}^{\infty}$ is the sequence that counts the number of different partitions of an integer $n$ into parts of -1 different kinds (sequence A010815 in OEIS). A very famous sequence belonging to the family of polynomials $\varphi_n(x)$ is the sequence of the Ramanujan numbers (A000594 in OEIS), i.e., the Ramanujan tau function introduced in \cite{Rama}, for some recent studies see, e.g., \cite{Charles}, \cite{Lygeros}, \cite{Baruah}.
The Ramanujan tau function corresponds to the sequence $(\varphi_n(-24))_{n=1}^{\infty}$. Thus, we have seen that it can be introduced in a combinatorial way as $\phi(\alpha(-24\boldsymbol{n}))$. Hence, the properties of the polynomials $\varphi_n(x)$ can be applied, in particular, to the Ramanujan tau function.

\begin{remark}
Ramanujan conjectured the multiplicative property of the tau function that was proved by Mordell \cite{Mor} by using modular functions. However, an elementary proof of this property still misses. Here, we would like to emphasize that an elementary proof of this property could be reached by proving that the polynomial $\varphi_{mn}(x) - \varphi_m(x)\varphi_n(x)$ has -24 as root when $m, n$ are coprime.
\end{remark}
 
In the following proposition, we summarize some interesting relations involving the Bell polynomials, the sum of divisors function and the polynomials $\varphi_n(x)$. 

\begin{proposition}
Let us consider the sum of divisors function $\sigma(m) = \sum_{d|m}d$. Then for all $n\geq1$
\begin{itemize}
\item $\varphi_{n+1}(x) = \cfrac{1}{n!} \stackrel[h=1]{n}{\sum}B_{n,h}\left(0!\sigma\left(1\right),1!\sigma\left(2\right),\ldots,\left(n-h\right)!\sigma\left(n-h+1\right)\right)x^{h}$
\item $\varphi_{n+1}(x) = \stackrel[h=1]{n}{\sum}\frac{1}{h!}B_{n,h}^{0}\left(\sigma\left(1\right),\frac{\sigma\left(2\right)}{2},\ldots,\frac{\sigma\left(n-h+1\right)}{n-h+1}\right)x^{h}$
\item $\sum_{h=0}^{\infty}\varphi_{h+1}(x)t^h = e^{x\sum_{k=1}^{\infty}\frac{\sigma(k)}{k}t^k}$
\end{itemize}
\end{proposition}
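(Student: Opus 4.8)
The plan is to observe that all three identities are equivalent reformulations of a single generating-function statement, so that the whole proposition reduces to computing $\log d(t)$. Recall that $\varphi_{n+1}(x)$ is defined as the coefficient of $t^n$ in $d(t)=\prod_{k=1}^{\infty}(1-t^k)^{-x}$, so I would work throughout with this product and extract coefficients at the end.

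First I would take logarithms. Using $-\log(1-t^k)=\sum_{m\geq 1}t^{km}/m$, I obtain
$$\log d(t)=-x\sum_{k=1}^{\infty}\log(1-t^k)=x\sum_{k=1}^{\infty}\sum_{m=1}^{\infty}\frac{t^{km}}{m}.$$
Grouping terms by total degree $n=km$ and summing over the divisors $k\mid n$ (so $m=n/k$), the coefficient of $t^n$ becomes $x\sum_{k\mid n}k/n=x\,\sigma(n)/n$. Hence
$$\log d(t)=x\sum_{k=1}^{\infty}\frac{\sigma(k)}{k}t^k,$$
and exponentiating gives the third identity immediately. This logarithmic computation is the genuine content of the proposition; everything else is formal manipulation of the two Bell-polynomial formalisms already recorded in the excerpt.

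Next I would pass to the second identity by applying the exponential formula in the language of ordinary partial Bell polynomials. Writing $d(t)=\exp\bigl(\sum_{m\geq 1}x_m t^m\bigr)$ with $x_m=x\,\sigma(m)/m$, expanding $\exp$ as $\sum_{h\geq 0}\frac{1}{h!}\bigl(\sum_{m\geq 1}x_m t^m\bigr)^h$, and invoking the defining identity $\bigl(\sum_{m\geq 1}x_m t^m\bigr)^h=\sum_{n\geq h}B_{n,h}^0(X)\,t^n$ from Definition \ref{def:bell}, I can read off
$$\varphi_{n+1}(x)=[t^n]\,d(t)=\sum_{h=1}^{n}\frac{1}{h!}B_{n,h}^0\Bigl(\tfrac{x\sigma(1)}{1},\tfrac{x\sigma(2)}{2},\ldots,\tfrac{x\sigma(n-h+1)}{n-h+1}\Bigr),$$
the $h=0$ term dropping out since $B_{n,0}^0=0$ for $n\geq 1$. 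Because $B_{n,h}^0$ is homogeneous of degree $h$ in its arguments (each monomial carries $\sum_j i_j=h$), I factor out $x^h$ from each term, which produces the second identity exactly.

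Finally I would convert to ordinary (exponential) Bell polynomials via the relation already stated in the excerpt, namely $B_{n,h}^0(x_1,\ldots,x_{n-h+1})=\frac{h!}{n!}B_{n,h}(1!\,x_1,2!\,x_2,\ldots,(n-h+1)!\,x_{n-h+1})$. Substituting $x_j=\sigma(j)/j$ turns the $j$-th argument into $j!\,\sigma(j)/j=(j-1)!\,\sigma(j)$; the factor $1/h!$ cancels the $h!$, leaving the prefactor $1/n!$ and the arguments $0!\,\sigma(1),1!\,\sigma(2),\ldots,(n-h)!\,\sigma(n-h+1)$ of the first identity. The only real obstacle is bookkeeping: tracking the index shift between the $(n-h+1)$ arguments of $B_{n,h}^0$ and those of $B_{n,h}$, and confirming the homogeneity degree so that the powers of $x$ emerge correctly. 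No step requires an idea beyond the $\log$ computation and the two Bell-polynomial identities supplied earlier.
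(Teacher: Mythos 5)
Your proposal is correct, but it runs the derivation in the opposite direction from the paper, and it is worth noting what changes as a result. The paper's proof starts from its Section~1 machinery: it applies equation~\eqref{eq:unbell2} (which was itself obtained via the inversion formula for partial Bell polynomials) to the constant sequence $\boldsymbol{a}=(-x)_{n=1}^{\infty}$, computes $\bar a_n = x(n-1)!\,\sigma(n)$, and thereby gets the first bullet (exponential Bell polynomials) immediately; it then converts to the ordinary Bell form to get the second bullet, and only at the end invokes the exponential formula $\exp\bigl(\sum_{m\geq1}x_m z^m\bigr)=\sum_{n\geq0}\bigl(\sum_{k=0}^n\frac{1}{k!}B^0_{n,k}\bigr)z^n$ to deduce the third. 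You instead prove the third bullet first, by computing $\log d(t)$ directly from the product $\prod_k(1-t^k)^{-x}$ and regrouping by divisors to obtain $x\sigma(n)/n$ as the coefficient of $t^n$; you then read off the second bullet from the same exponential formula used in reverse, and finish with the conversion $B^0_{n,h}=\frac{h!}{n!}B_{n,h}(1!x_1,2!x_2,\ldots)$ to get the first. Your route is entirely self-contained: it never uses equation~\eqref{eq:unbell2} and hence never needs the Bell-polynomial inversion formula, replacing that apparatus with the elementary expansion $-\log(1-t^k)=\sum_m t^{km}/m$ (which is, in substance, the same divisor-grouping computation the paper performed once in general form in Section~1). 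What the paper's route buys is economy given its framework --- the proposition then serves as a showcase for the isomorphism $\phi$ and the transform machinery --- whereas your route buys independence from that framework and makes the generating-function identity, rather than the Bell-polynomial identity, the logical source of the other two. All three of your steps (the logarithm computation, the degree-$h$ homogeneity of $B^0_{n,h}$ used to extract $x^h$, and the factorial bookkeeping $j!\,\sigma(j)/j=(j-1)!\,\sigma(j)$ in the conversion) check out.
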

\begin{proof}
Let us consider the sequence $a = (-x)_{n=1}^{\infty}$. In this case the sequence $(\bar a_n)_{n=1}^{\infty}$ defined in  equation \eqref{eq:unbell2} is
$$\bar a_n = x n! \underset{\begin{array}{c}
1\leq k\leq n\\
k|n
\end{array}}{\sum}\frac{k}{n}=x\left(n-1\right)!\sigma\left(n\right).$$
Thus, from equation \eqref{eq:unbell2} we have
$$\varphi_{n+1}(x) = \cfrac{1}{n!} \stackrel[h=1]{n}{\sum}B_{n,h}\left(0!\sigma\left(1\right),1!\sigma\left(2\right),\ldots,\left(n-h\right)!\sigma\left(n-h+1\right)\right)x^{h}$$
for any $n \geq 1$, where we have also used the homogeneity property of the Bell polynomials. In terms of partial Bell polynomials we can write
\begin{equation}\label{eq:rama-bell} \varphi_{n+1}(x) = \stackrel[h=1]{n}{\sum}\frac{1}{h!}B_{n,h}^{0}\left(\sigma\left(1\right),\frac{\sigma\left(2\right)}{2},\ldots,\frac{\sigma\left(n-h+1\right)}{n-h+1}\right)x^{h}. \end{equation}
Since
$$\exp\left(\underset{n\geq1}{\sum}x_{m}z^{m}\right)=\underset{n\geq0}{\sum}\left(\stackrel[k=0]{n}{\sum}\frac{1}{k!}B_{n,k}^{0}\left(x_{1},x_{2},\ldots,x_{n-k+1}\right)\right)z^{n},$$
by equation \eqref{eq:rama-bell} we can also observe that
$$\stackrel[h=0]{\infty}{\sum}\varphi_{h+1}\left(x\right)t^{h}=e^{x\stackrel[k=1]{\infty}{\sum}\frac{\sigma\left(k\right)}{k}t^{k}}.$$
\end{proof}

\section{A connection between Cauchy and Dirichlet products}
The Dirichlet convolution $\odot$ is a well known product between arithmetic functions that can be also defined between two sequences $\boldsymbol{a}$ and $\boldsymbol{b}$ as follows:
$$\boldsymbol{a} \odot \boldsymbol{b} = \boldsymbol{c}, \quad c_n := \sum_{d|n} a_d b_{n/d}, \quad \forall n \geq 1$$
In the following, we deal with the set of sequences starting with 1, i.e., the set $\tilde H_R$ and we define a transform $F: (\tilde H_R, *) \rightarrow (\tilde H_R, \odot)$.

Given the sequence of prime numbers $(p_1, p_2, ...)$, we define $\nu_i(n)$ the $p_i$--adic evaluation of the positive integer $n$, i.e., $\nu_i(n) = e_i$, where $e_i$ is the greatest nonnegative integer such that $p_i^{e_i}$ divides $n$. We define the transform $F$ as follows:
\begin{equation} \label{eq:dirichlet} F(\boldsymbol{a}) = \boldsymbol{b}, \quad b_n := \prod_{i=1}^{\infty} a_{\nu_i(n) + 1}, \quad \forall n \geq 1. \end{equation}
In the following, for the sake of simplicity, we also use the following notation:
$$F(a_n) = b_n,$$
where $b_n$ is given by \eqref{eq:dirichlet}.

By definition, given $\gcd(m,n) = 1$, we have
$$F(a_{mn}) = F(a_m) F(a_n).$$
Hence, if $n = \prod_{i=1}^{\infty} p_i^{e_i}$ (where, some $e_i$ can possibly be equal to zero), then
$$F(a_n) = \prod_{i=1}^{\infty}F(a_{p_i^{e_i}}).$$
Now, we would like to prove that given $\boldsymbol{a}, \boldsymbol{b} \in \tilde H_R$, we have $F(\boldsymbol{a} * \boldsymbol{b}) = F(\boldsymbol{a})\odot F(\boldsymbol{b})$. It is sufficient to prove that for any $i \geq 1$ and $m \geq 0$, we have
$$F(r_{p_i^m}) = s_{p_i^m},$$
where $\boldsymbol{r} = \boldsymbol{a} * \boldsymbol{b}$ and $\boldsymbol{s} = F(\boldsymbol{a})\odot F(\boldsymbol{b})$. Considering that $F(a_{p_i^t}) = a_{t+1}$, we have
$$s_{p_i^m} = \sum_{d|p_i^{m}}F(a_d)F(b_{p_i^m|d}) = \sum_{t=0}^mF(a_{p_i^t})F(b_{p_i^{m-t}}) = \sum_{t=0}^ma_{t+1}b_{m-t+1}=r_{m+1}=F(r_{p_i^m}).$$
If $F(a_n) = 0$ for any $n > 0$, then $F(a_p) = F(a_p^2) = ... = F(a_p^m) = ... = 0$, which implies $a_2 = ... = a_m = ... = 0$. Thus, $ker(F)$ only contains the identity and $F$ is a monomorphism.

\end{document}